\def\var{\mbox{Var}}
\def\bbe{\mathbb E}
\def\bbr{\mathbb R}
\theoremstyle{plain}
\newtheorem{lem}{Lemma}[section]
\newtheorem{cor}[lem]{Corollary}
\newtheorem{thm}[lem]{Theorem}
\newtheorem{rmk}[lem]{Remark}
\theoremstyle{definition}
\numberwithin{equation}{section}
\begin{document}

\title{Iterated Jackknives And Two-Sided Variance Inequalities}
\author{O.~Bousquet\thanks{Google Research, Brain Team, 8002 Z\"urich, Switzerland.} \; C.~Houdr\'e\thanks{Georgia Institute of Technology, 
School of Mathematics, Atlanta, Georgia, 30332-0160.} 
\thanks{Partially supported by the 
grant \# 524678 from the Simons Foundation and by a Simons Fellowship grant 
\# 267336.  This material is also based, in part, upon work supported by the 
National Science Foundation under Grant No.1440140, while this author was in residence at the Mathematical 
Sciences Research Institute in Berkeley, California, during the Fall semester of 2017. }
}

\maketitle
\begin{abstract}
We consider the variance of a function of $n$ independent random variables and provide new inequalities which, in particular, extend  previous results obtained for symmetric functions  
in the i.i.d.~setting.  
For instance, we obtain various upper and lower variance bounds 
based on iterated jackknives statistics that can be considered as  
generalizations of the Efron-Stein inequality.
\end{abstract}

\section{Introduction}
The properties of functions of $n$ independent random variables, 
and in particular the estimation of their moments from the moments of their increments (i.e., when replacing a random variable by an independent copy) have been thoroughly studied (see, e.g., \cite{BLM} for a 
comprehensive overview). We focus here on the variance and consider how to refine and generalize 
known extensions of the Efron-Stein inequality in the non-symmetric, non-iid case.

First, let us review some of the existing results. 
Let $X_1,X_2,\dots, X_n$ be iid random variables and let $S:\bbr^n\to \bbr$
be a statistic of interest which is symmetric, i.e., invariant under any 
permutation of its arguments, and square integrable.  The (original) Efron-Stein
inequality \cite{ES}, states that the jackknife estimates of variance is
biased upwards, i.e., denoting by $\tilde X$ an independent copy of $X_1,\dots, X_n$,
and setting $S_i=S(X_1,\dots, X_{i-1},X_{i+1},\dots, X_n,\tilde X)$, 
$i=1,\dots, n$, and $S_{n+1}=S$, then
\begin{equation}\label{eq1.1}
\var\ S\le\bbe  J_1,
\end{equation}
where
\begin{align}\label{eq1.2}
J_1=\sum^{n+1}_{i=1}(S_i-\bar S)^2=\frac1{(n+1)}\mathop{\sum\sum}\limits_{1\le i<j\le n+1} (S_i-S_j)^2, 
\end{align}
and $\bar S=\sum^{n+1}_{i=1}S_i/(n+1)$.
Beyond the original framework, the inequality \eqref{eq1.1} has seen many extensions
and generalizations with different proofs which are well described in \cite{BLM}, whose notation we 
essentially adopt 
and to which we refer for a more complete bibliography and many instances of applications. 
Let us just say that
\eqref{eq1.1} can be seen as the ``well known" tensorization property of the 
variance which asserts that if $X_1,X_2,\dots, X_n$ are independent
random variables with $X_i\sim \mu_i$, then
\begin{equation}\label{eq1.3}
\var_{\mu^n} S\le \bbe_{\mu^n}\sum^n_{i=1}\var_{\mu_i} S,
\end{equation}
where $\bbe_{\mu^n}$ and $\var_{\mu^n}$ are respectively the expectation and
variance with respect to $\mu^n:=\mu_1\otimes\cdots\otimes\mu_n$, the joint law of $X_1,X_2,\dots, X_n$, 
while $\var_{\mu_i} S$ is the variance of $S$ with respect to $\mu_i$, the 
law of $X_i$.  In fact, if for each $i=1,2,\dots, n$, $\tilde X_i\sim\tilde\mu_i$ is
an independent copy of $X_i$, then \eqref{eq1.3} can be rewritten as
\begin{align}\label{eq1.4}
\var_{\mu^n} S&\le\frac12\bbe_{\mu^n}\sum^n_{i=1}\bbe_{\mu_i\otimes\tilde\mu_i}
(S-S_i)^2\nonumber\\
&=\frac12\bbe_{\mu^n}\sum^n_{i=1}\bbe_{\tilde\mu_i}(S-S_i)^2,
\end{align}
where $S_i=S(X_1,\dots, X_{i-1},\tilde X_i, X_{i+1},\dots, X_n)$.

Neither \eqref{eq1.1} nor \eqref{eq1.4}, whose proof can be obtained, for example, by 
induction, require $S$ to be symmetric.  In case $S$ is symmetric, and the random variables are 
identically distributed, the right-hand side of \eqref{eq1.4} becomes $n\bbe_{\mu^n\otimes\tilde\mu_1}
(S-S_1)^2/2$ while, via \eqref{eq1.2}, the right-hand side of \eqref{eq1.1} becomes
$\binom{n+1}{2}\bbe(S_1-S_2)^2/(n+1)= n\bbe(S_1-S_2)^2/2$, and
\eqref{eq1.4} and \eqref{eq1.1} are equal.

Since the jackknife estimate of variance is biased upwards, it is natural to try
to estimate the bias $\bbe J_1-\var\ S$, and such an attempt is already presented in 
\cite{H} via the ``iterated jackknives".   Let us recall what was meant there:
Resampling the jackknife statistics, introduce for any $k=2,\dots, n$, the
iterated jackknives $J_2,J_3,\dots, J_n$, leading to both upper and lower 
bounds on $\var\ S$, showing, in particular, that 
\begin{equation}\label{eq1.5}
\frac12\bbe J_2-\frac16\bbe J_3\le \bbe J_1-\var\ S
\le\frac12\bbe J_2.
\end{equation}

In \cite{H}, the inequalities \eqref{eq1.1} and \eqref{eq1.5} were viewed as statistical
versions of generalized (multivariate) Gaussian Poincar\'e inequalities previously obtained in \cite{HK}.  
Indeed, setting $\nabla S : =(S-S_1, S- S_2, \dots, S-S_n)$, then $\bbe J_1 = \bbe\|\nabla S\|^2$.  
If instead of looking at the vector of first differences, one looks at second and third ones,  
then the corresponding norms will lead to (1.5).  
Throughout the years, it was asked whether or not an inequality such as \eqref{eq1.5} 
would have a general version 
and a positive answer had been informally given.  
The aim of the present note is to provide a synthetic proof of these,  
removing the iid and symmetry assumptions in \eqref{eq1.5} and 
its  generalizations, leading to generic inequalities.   
This could be useful,  as these dormant inequalities 
seem to have 
found, in recent times, some new life, e.g., see \cite{M}, \cite{BGS}, \cite{MP}.

\section{Iterated Jackknife Bounds}

Throughout and unless otherwise  noted, $X_1,\dots, X_n$ are independent random variables 
and $S:\bbr^n\to \bbr$ is a 
Borel function such that $\bbe S^2(X_1, \dots, X_n) < +\infty$.  
Next, and if $S$ is short for $S(X_1, \dots, X_n)$, let, for any $i=1, \dots, n$, 
$\bbe^{(i)}$ denote the conditional expectation with respect to 
the $\sigma$-field generated  by $X_1, \dots, X_{i-1}, X_{i+1}, \dots, X_n$.  Hence,  
\begin{align}\label{defcond}
\bbe^{(i)} S&:= \bbe (S\mid X_1,\dots, X_{i-1},X_{i+1},\dots, X_n) \nonumber\\
&\;= \int_{-\infty}^{+\infty}S(X_1,\dots, X_{i-1},x_i, X_{i+1},\dots, X_n)\mu_i(dx_i),
\end{align}
where $\mu_i$ is the law of $X_i$.  By convention, $\bbe^{(0)}$ is the identity operator and so 
$\bbe^{(0)} S = S$.  Iterating the above, it is clear that 
\begin{align}\label{commu}
\bbe^{(i)}\bbe^{(j)} S= \bbe^{(j)}\bbe^{(i)} S &= \bbe(S\mid X_1,\dots, X_{i-1}, X_{i+1},\dots, X_{j-1}, X_{j+1},\dots, X_n) \\
&:=\bbe^{(i,j)} S = \bbe^{(j,i)} S, \nonumber
\end{align}
for any $i, j=1, \dots, n$ and that for $i= 0, 1, \dots, n$, 
$$\bbe^{(i)}\bbe^{(0)} S= \bbe^{(0)}\bbe^{(i)} S := \bbe^{(i,0)} S = \bbe^{(0,i)} S= \bbe^{(i)} S. $$   
Next, let  
$$\var^{(i)}S := \bbe^{(i)}( S - \bbe^{(i)} S)^2 = \bbe^{(i)} S^2 -(\bbe^{(i)} S)^2,$$
$i=0,1, \dots, n$, and  for any $i, j = 0, 1, \dots, n$, set   
\begin{equation}\label{firstit}
\var^{(i,j)}S := \bbe^{(i)}\var^{(j)}S  - \var^{(j)}\bbe^{(i)} S = \var^{(j,i)}S \ge 0.  
\end{equation}
where, above,  the rightmost equality follows from the commutativity property of 
the conditional expectations, as given in  \eqref{commu}, 
while the inequality follows from convexity, and more precisely 
from the conditional Jensen's inequality.

At this point we also note that although $\var^{(i)}$ is the conditional variance 
with respect to the $\sigma$-field generated  by $X_1, \dots, X_{i-1}, X_{i+1}, \dots, X_n$, 
$\var^{(i,j)}$ is not the conditional variance 
with respect to the $\sigma$-field generated  by 
$X_1, \dots, X_{i-1}, X_{i+1}, \dots, X_{j-1}, X_{j+1}, \dots, X_n$.  
Indeed, 
\begin{equation}\label{firstdecomp}
\var^{(i,j)}S = \bbe^{(i,j)}(S- \bbe^{(i,j)}S)^2 - \var^{(i)}\bbe^{(j)} S - \var^{(j)}\bbe^{(i)} S.  
\end{equation}
Further iterating, for $i_1, i_2, \dots, i_k \in \{0, 1, 2, \dots, n\}$, then $\bbe^{(i_1)}\cdots\bbe^{(i_k)}:=\bbe^{(i_1,i_2\dots,i_k)}$ 
is uniquely defined, i.e., the order in which the indices are taken is 
irrelevant, in particular $\bbe^{(1, 2, \dots, n)}S = \bbe S$.  
Still, iterating, set 
\begin{equation}\label{generalit}
\var^{(i_1,i_2, \dots, i_k)}S := \bbe^{(i_1)}\var^{(i_2, \dots, i_{k})}S  - 
\var^{(i_2, \dots, i_{k})}\bbe^{(i_1)}S,   
\end{equation}
where again, above, the order in which the 
indices $i_1,i_2, \dots, i_k \in \{0, 1, 2, \dots, n\}$ are taken is irrelevant, and 
further, by convexity, \eqref{generalit} is non--negative, i.e., 
$$\var^{(i_1,i_2, \dots, i_k)}S \ge 0.$$ 

Another set of identities, more in line with \cite{H}, is also easily obtained via iterated differences, namely,   
$\bbe\var^{(i)}S = \bbe\left(S-S_i\right)^2/2$, and iterating, 
\begin{equation}\label{diffiterees}
\bbe \var^{(i_1,i_2, \dots, i_k)}S =\frac{1}{2^k}\bbe\left(\left(S-S_{i_1}\right)_{i_2, \dots, i_k}\right)^2.
\end{equation}

With the help of the above definitions, and in view of \cite{H},  let us now introduce the iterated 
jackknives, 
$$J_k := \sum_{1\le i_1\ne i_2\cdots\ne i_k\le n}
\var^{(i_1,\dots, i_k)}S = k!\sum_{1\le i_1<i_2<\cdots<i_k \le n} \var^{(i_1,\dots, i_k)}S .$$

Clearly, $J_1 = \sum_{i=1}^n \var^{(i)} S$ and in view of \eqref{defcond}, \eqref{eq1.3} 
can just be rewritten as:   

\begin{equation}\label{jack1}
\var S \le \bbe\sum^n_{i=1}\var^{(i)} S = \bbe J_1.    
\end{equation}

Still in view of the results of \cite{H}, we  now intend to prove:

\begin{thm}\label{jackit}  For any $p=1, 2, \dots, [n/2]$, 
\begin{equation}\label{ineq}
\sum^{2p}_{k=1}\frac{(-1)^{k+1}}{k!}\bbe J_k \le {\rm Var}\; S \le\sum^{2p-1}_{k=1} 
\frac{(-1)^{k+1}}{k!} \bbe J_k, 
\end{equation}
and 
\begin{equation}\label{varexp}
{\rm Var}\; S = \sum^n_{k=1} \frac{(-1)^{k+1}}{k!}\bbe J_k.
\end{equation}
\end{thm}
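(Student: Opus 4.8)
The plan is to reduce both \eqref{varexp} and \eqref{ineq} to the Hoeffding (ANOVA) decomposition of $S$ together with two elementary binomial identities; once the decomposition is in place there is essentially no analytic content left. First I would record the decomposition: since the $\bbe^{(i)}$ are commuting conditional expectations (commuting orthogonal projections in $L^2$), expanding $\mathrm{id}=\prod_{i=1}^{n}\big(\bbe^{(i)}+(\mathrm{id}-\bbe^{(i)})\big)$ gives $S=\sum_{A\subseteq\{1,\dots,n\}}S_A$ with $S_A:=\prod_{i\in A}(\mathrm{id}-\bbe^{(i)})\prod_{j\notin A}\bbe^{(j)}S$, where $S_A$ depends only on $(X_i)_{i\in A}$ and, by idempotence of the $\bbe^{(i)}$, satisfies $\bbe^{(i)}S_A=0$ for every $i\in A$. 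Conditioning on all but one coordinate in the symmetric difference of two index sets shows the $S_A$ are pairwise orthogonal in $L^2$; writing $\sigma_A^2:=\bbe S_A^2$ and $\Sigma_a:=\sum_{|A|=a}\sigma_A^2\ge 0$, this yields $\bbe S^2=\sum_{a=0}^{n}\Sigma_a$, $(\bbe S)^2=\Sigma_0$, $\var S=\sum_{a=1}^{n}\Sigma_a$, and also $\bbe^{(i)}S=\sum_{A:\,i\notin A}S_A$, so the Hoeffding components of $\bbe^{(i)}S$ are the $S_A$ with $i\notin A$ (and $0$ otherwise).

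Next I would compute the iterated jackknives in this basis. The claim is that, for distinct $i_1,\dots,i_k\in\{1,\dots,n\}$,
$$\bbe\,\var^{(i_1,\dots,i_k)}S=\sum_{A\supseteq\{i_1,\dots,i_k\}}\sigma_A^2,$$
proved by induction on $k$ directly from the recursion \eqref{generalit}: the case $k=1$ is $\bbe\var^{(i)}S=\bbe S^2-\bbe\big[(\bbe^{(i)}S)^2\big]=\sum_A\sigma_A^2-\sum_{A\not\ni i}\sigma_A^2$, and for the inductive step one takes $\bbe$ of $\var^{(i_1,\dots,i_{k+1})}S=\bbe^{(i_{k+1})}\var^{(i_1,\dots,i_k)}S-\var^{(i_1,\dots,i_k)}\bbe^{(i_{k+1})}S$, applies the hypothesis once to $S$ and once to the function $\bbe^{(i_{k+1})}S$ (whose nonzero Hoeffding components are the $S_A$ with $i_{k+1}\notin A$), and subtracts, what remains being $\sum_{A\supseteq\{i_1,\dots,i_k\},\,i_{k+1}\in A}\sigma_A^2$. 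Summing over the $k!$ orderings of a fixed $k$-set and then over all $k$-sets,
$$\frac1{k!}\,\bbe J_k=\sum_{|K|=k}\ \sum_{A\supseteq K}\sigma_A^2=\sum_{A}\binom{|A|}{k}\sigma_A^2=\sum_{a=0}^{n}\binom{a}{k}\Sigma_a.$$
(Equivalently one could first establish the operator identity $\var^{(i_1,\dots,i_k)}S=\sum_{B\subseteq\{i_1,\dots,i_k\}}(-1)^{|B|}\bbe^{(\{i_1,\dots,i_k\}\setminus B)}\big[(\bbe^{(B)}S)^2\big]$ by the same induction and take expectations afterwards.)

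Then I would finish combinatorially. Using the partial alternating sum $\sum_{k=1}^{m}(-1)^{k+1}\binom{a}{k}=1-(-1)^{m}\binom{a-1}{m}$ (immediate from Pascal's rule by induction on $m$, valid for all integers $a\ge 0$ with the convention $\binom{-1}{m}=(-1)^{m}$) together with the previous display,
$$\sum_{k=1}^{m}\frac{(-1)^{k+1}}{k!}\,\bbe J_k=\sum_{a=0}^{n}\Sigma_a\Big(1-(-1)^{m}\binom{a-1}{m}\Big)=\var S-(-1)^{m}\sum_{a=1}^{n}\binom{a-1}{m}\Sigma_a,$$
the last step because the $a=0$ summand equals $\Sigma_0\big(1-(-1)^{m}(-1)^{m}\big)=0$ and $\bbe S^2-\Sigma_0=\var S$. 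Taking $m=n$ removes the correction term (since $\binom{a-1}{n}=0$ for $1\le a\le n$) and gives \eqref{varexp}; for general $m$ the correction $(-1)^{m}\sum_{a\ge 1}\binom{a-1}{m}\Sigma_a$ has the sign of $(-1)^{m}$ because $\binom{a-1}{m}\ge 0$ and $\Sigma_a\ge 0$, so $\var S\le\sum_{k=1}^{m}\frac{(-1)^{k+1}}{k!}\bbe J_k$ when $m$ is odd and the reverse inequality when $m$ is even; choosing $m=2p-1$ and $m=2p$ is precisely \eqref{ineq}, the constraint $p\le[n/2]$ only ensuring $2p\le n$ so that $J_{2p}$ is a genuine (nonempty) sum over $2p$-tuples of distinct indices.

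The only delicate point is the orthogonality of the $S_A$ and their behaviour under the $\bbe^{(i)}$, which drive the first two steps; the rest is bookkeeping. I expect the superficially appealing alternative — trying to write $\var S-\sum_{k=1}^{m}\frac{(-1)^{k+1}}{k!}\bbe J_k$ as an honestly nonnegative linear combination of the manifestly nonnegative quantities $\bbe\,\var^{(i_1,\dots,i_{m+1})}S$ — to fail, since the required coefficients become sign-definite only after passing to the orthogonal components $\Sigma_a$; at the level of the $\var^{(K)}$ themselves they change sign. Hence the ANOVA decomposition really is doing the work.
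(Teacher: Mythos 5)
Your argument is correct, but it is not the route the paper takes for Theorem~\ref{jackit}: there the proof is a direct telescoping induction on the auxiliary quantities $R_k=\sum_{i_1<\cdots<i_k}\var^{(i_1,\dots,i_k)}(\bbe^{(1,\dots,i_1-1)}S)$, which satisfy $\bbe R_1=\var S$, $\bbe R_k=\bbe J_{k-1}/(k-1)!-\bbe R_{k-1}$ and $n!\,R_n=J_n$, giving \eqref{varexp}; the inequalities \eqref{ineq} then follow from the single convexity bound \eqref{ConvIne}, i.e.\ $\bbe J_k\ge k!\,\bbe R_k$, since the remainder after $m$ terms is exactly $(-1)^m\bbe R_{m+1}\,$ with $R_{m+1}\ge 0$. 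Your proof instead runs everything through the Hoeffding/ANOVA decomposition, which is essentially the paper's Section~3 (your identity $\bbe J_k/k!=\sum_a\binom{a}{k}\Sigma_a$ is Lemma~\ref{lej}, and the paper itself remarks that \eqref{varexp} can be recovered from it); the partial alternating binomial identity then yields both \eqref{varexp} and the sign of the remainder, and your computation of that remainder as $(-1)^m\sum_{a\ge 1}\binom{a-1}{m}\Sigma_a$ in fact gives more: since $\binom{a-1}{m}\ge 1$ for $a\ge m+1$ and $\Sigma_{m+1}=\bbe K_{m+1}/(m+1)!$, it immediately delivers the refinement of Theorem~\ref{jackit2} as well. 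What the paper's argument buys is self-containedness and economy (no orthogonal decomposition, just commuting conditional expectations and Jensen, a structure the authors hope to push to $\Phi$-entropies); what yours buys is an exact, sign-transparent remainder and direct access to the $K_k$ refinements. One small correction to your closing remark: while it is true that the remainder is not a nonnegative combination of the quantities $\bbe\var^{(i_1,\dots,i_{m+1})}S$ themselves, the paper's proof shows that a remainder-form argument does succeed once one applies $\var^{(i_1,\dots,i_{m+1})}$ to the partial conditional expectations $\bbe^{(1,\dots,i_1-1)}S$ rather than to $S$, so the ANOVA decomposition is not strictly necessary for the result.
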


\begin{proof}
The proof of \eqref{varexp} is a simple decomposition/induction, while 
that of \eqref{ineq} further uses convexity.  For $k=1, 2, \dots, n$,  
let 
$$R_k=\sum_{1\le i_1<\cdots <i_k\le n} \var^{(i_1,\dots, i_k)}
(\bbe^{(1,\dots, i_1-1)}S),$$
with the understanding that for $i=1$, $\bbe^{(1,i-1)}S= \bbe^{(0)}S = S$.   Then,  first note that, 
\begin{align}
\bbe R_1 &= \bbe \sum_{i_1=1}^n \left((\bbe^{(1,\dots, i_1-1)}S)^2 
- (\bbe^{(1,\dots, i_1)}S)^2\right) \nonumber \\
&= \bbe(S^2 - (\bbe S)^2)  = \var S.   
\end{align}
Notice further that for $2\le k \le n-1$, 
\begin{align}\label{recur}
\bbe R_k&= \bbe \sum_{1\le i_1<\cdots < i_k\le n}\var^{(i_1,\dots, i_k)}
(\bbe^{(1,\dots, i_1-1)}S) \nonumber\\
&=\bbe \sum_{1\le i_1<\cdots < i_k\le n}\left( \var^{(i_2,\dots, i_k)}
(\bbe^{(1,\dots, i_1-1)}S)-\var^{(i_2,\dots, i_k)} (\bbe^{(1,\dots, i_1)}S)\right) \nonumber \\
&=\bbe \sum_{1<i_2<\cdots < i_k\le n}\sum_{i_1=1}^{i_2-1}\left( \var^{(i_2,\dots, i_k)}
(\bbe^{(1,\dots, i_1-1)}S)-\var^{(i_2,\dots, i_k)} (\bbe^{(1,\dots, i_1)}S)\right) \nonumber \\
&=\bbe \sum_{1\le i_2<\cdots < i_k\le n}\left(\var^{(i_2,\dots, i_k)} S-
\var^{(i_2,\dots, i_k)}(\bbe^{(1,\dots, i_2-1)} S)\right) \nonumber \\
&=\frac{\bbe J_{k-1}}{(k-1)!}-\bbe R_{k-1}.
\end{align}
Finally, it is clear that, $R_n = \var^{(1, \dots, n)} S = J_n/{n!}$, and so 
$ n!\bbe R_n = \bbe J_n.  $
Combining the last three identities, gives \eqref{varexp}.  
To obtain \eqref{ineq}, note first that by convexity and for any 
$1\le i_1<i_2<\cdots <i_k\le n$, 
\begin{equation}\label{ConvIne}
\bbe^{(1,\dots, i_1-1)} \var^{(i_1,\dots, i_k)} S\ge
\var^{(i_1,\dots, i_k)} (\bbe^{(1,\dots, i_1-1)}S).  
\end{equation}
Hence, taking expectation and summing gives $\bbe J_k\ge k!\bbe R_k$, which 
when combined with \eqref{recur} finishes the proof.  
\end{proof}

\begin{rmk} 
(i)  In case $S$ is symmetric, i.e., invariant under any permutation of its arguments, 
$J_k = n(n-1)\dots (n-k+1)\var^{(1, \dots, k)}S$, then 
$\bbe J_k = n(n-1)\dots (n-k+1)\bbe\var^{(1, \dots, k)}S$, and \eqref{varexp} 
and \eqref{ineq} precisely recover corresponding results in \cite{H}.  

(ii)  The inequalities \eqref{ineq} can be viewed as martingale inequalities.  

(iii)  As in \cite{BLM} or \cite{BGS}, one could also rewrite \eqref{ineq} using only 
the positive or negative parts of the involved quantities.  

(iv)  It is natural to wonder whether or not the above 
inequalities have $\bf\Phi$-entropic versions; this will be 
explored and presented elsewhere.  
\end{rmk}

Let us now further refine \eqref{ineq} providing, in particular, a non-trivial non-negative lower  bound on the 
bias $\bbe J_1 - \var\, S$ improving upon  \eqref{eq1.5}.  
To do so, denote by $\overline{(i_1,\dots, i_k)}$ the complement of the indices $(i_1,\dots, i_k)$ 
(i.e., the ordered sequence of elements of the set $\{1,\ldots,n\}\backslash \{i_1,\dots, i_k\}$), 
and introduce, for $k\ge 1$,  the following quantities:
$$K_k := k!\sum_{1\le i_1<i_2<\cdots<i_k \le n} \var^{(i_1,\dots, i_k)} {\bbe}^{\overline{(i_1,\dots, i_k)}}S .$$

It is clear that by Jensen's inequality and the convexity of $\var^{(i_1,\dots, i_k)}$ we have
\[
\bbe K_k \le \bbe J_k\,.
\]

\begin{thm}\label{jackit2}  For any $p=1, 2, \dots, [n/2]$, 
\begin{equation}\label{ineq2}
\sum^{2p}_{k=1}\frac{(-1)^{k+1}}{k!}\bbe J_k  + \frac{1}{(2p+1)!}\bbe K_{2p+1}\le {\rm Var}\; S \le\sum^{2p-1}_{k=1} 
\frac{(-1)^{k+1}}{k!} \bbe J_k - \frac{1}{(2p)!}\bbe K_{2p}.
\end{equation}
\end{thm}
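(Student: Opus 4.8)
The plan is to mimic the proof of Theorem \ref{jackit}, but to keep one more term in the telescoping/convexity comparison, replacing the final crude bound $R_n \ge 0$ (implicitly used when truncating \eqref{varexp}) by a sharper estimate that produces the extra $\bbe K_{2p+1}$ or $-\bbe K_{2p}$ term. Concretely, recall from \eqref{recur} that $\bbe R_k = \bbe J_{k-1}/(k-1)! - \bbe R_{k-1}$, so unrolling the recursion from $R_1 = \var S$ downward gives, for any $1 \le m \le n$,
\begin{equation}\label{unroll}
\var S = \sum_{k=1}^{m-1} \frac{(-1)^{k+1}}{k!}\,\bbe J_k + (-1)^{m+1}\,\bbe R_m .
\end{equation}
The sign of the remainder $(-1)^{m+1}\bbe R_m$ alternates, and dropping it recovers the one-sided bounds in \eqref{ineq}. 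The point is that $\bbe R_m$ is \emph{not} merely nonnegative: because of the extra conditioning built into $R_k$ (the operators $\bbe^{(1,\dots,i_1-1)}$ applied to $S$ before taking $\var^{(i_1,\dots,i_k)}$), one expects $\bbe R_m \ge$ a genuinely positive functional, and the natural candidate is a $1/m!$ multiple of $\bbe K_m$.

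The key step is therefore a refined version of the convexity inequality \eqref{ConvIne}: I would show that for each $k$,
\[
\bbe R_k \;\ge\; \frac{1}{k!}\,\bbe K_k ,
\]
i.e. that averaging $S$ over the complementary coordinates before applying $\var^{(i_1,\dots,i_k)}$ lowers the quantity no more than applying only the ``prefix'' conditioning $\bbe^{(1,\dots,i_1-1)}$ does. This should follow from iterated conditional Jensen: $\var^{(i_1,\dots,i_k)}(\bbe^{(1,\dots,i_1-1)}S)$ is, coordinate by coordinate in the complement $\overline{(i_1,\dots,i_k)}$, a conditional expectation of $\var^{(i_1,\dots,i_k)}$ applied to a further-conditioned version of $S$, and convexity of $\var^{(i_1,\dots,i_k)}$ together with the commutativity of all the $\bbe^{(\cdot)}$ (established in \eqref{commu} and the lines after \eqref{generalit}) lets one push the averaging all the way down to $\bbe^{\overline{(i_1,\dots,i_k)}}S$ while only decreasing the expectation. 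Plugging $\bbe R_m \ge \bbe K_m / m!$ into \eqref{unroll} at $m = 2p+1$ (where the remainder enters with a $+$ sign, giving the lower bound) and at $m = 2p$ (where it enters with a $-$ sign, giving the upper bound) yields exactly \eqref{ineq2}.

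The main obstacle is making the comparison $\bbe R_k \ge \bbe K_k/k!$ fully rigorous in the non-symmetric setting: one must be careful about \emph{which} variables are still ``live'' at each stage. In $R_k$ the inner object $\bbe^{(1,\dots,i_1-1)}S$ has already been integrated over coordinates $1,\dots,i_1-1$, some of which may lie among $i_2,\dots,i_k$ or among the complement; in $K_k$ one integrates over the \emph{entire} complement of $\{i_1,\dots,i_k\}$. So the claim is really that $\bbe^{(1,\dots,i_1-1)}$-conditioning is ``less aggressive'' than full complementary conditioning, and one needs the monotonicity $\bbe^{(A)}\var^{(i_1,\dots,i_k)}(\bbe^{(B)}S) \ge \var^{(i_1,\dots,i_k)}(\bbe^{(A \cup B)}S)$ for disjoint index sets $A,B$ both disjoint from $\{i_1,\dots,i_k\}$, which again is conditional Jensen applied to the convex map $\var^{(i_1,\dots,i_k)}$ after commuting operators. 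Once this monotonicity lemma is in hand — it is the exact analogue of \eqref{ConvIne} with $\bbe^{(1,\dots,i_1-1)}S$ in place of $S$ and the complement in place of the prefix — the rest is the same bookkeeping as in the proof of Theorem \ref{jackit}, and I would also remark (as in Remark \ref{jackit}(i)) that in the symmetric case \eqref{ineq2} collapses to the known refinement behind \eqref{eq1.5}.
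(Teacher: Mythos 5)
Your proposal is correct and follows essentially the same route as the paper: the authors also unroll the recursion $\bbe R_k = \bbe J_{k-1}/(k-1)! - \bbe R_{k-1}$ from Theorem~\ref{jackit} and simply replace the bound $\bbe J_k \ge k!\,\bbe R_k$ by $\bbe K_k \le k!\,\bbe R_k$, justified exactly by the convexity/Jensen argument behind \eqref{ConvIne}. Your worry about prefix indices lying among $i_2,\dots,i_k$ is moot since $i_1<\cdots<i_k$ forces $\{1,\dots,i_1-1\}$ into the complement, so your monotonicity lemma applies directly and the bookkeeping is identical to the paper's.
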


\begin{proof}
The only modification compared to the proof of Theorem~\ref{jackit} is that instead of 
using the bound $\bbe J_k\ge k!\bbe R_k$ we use the fact that
\[
\bbe K_k \le k!\bbe R_k\,,
\]
which follows from the convexity of $\var^{(i_1,\dots, i_k)}$, i.e., from \eqref{ConvIne}.
\end{proof}

In particular, from Theorem~\ref{jackit} and 
Theorem~\ref{jackit2}, (the case $p=0$, being clear) 
the following inequalities hold true:
\begin{eqnarray*}
0\le \bbe K_1 \le \var\, S \le \bbe J_1, \\
0 \le \frac{1}{2} \bbe K_2 \le \bbe J_1 -  \var\, S \le\frac12\bbe J_2.
\end{eqnarray*}

\section{Relationship With The Hoeffding Decomposition}

Let us recall the notion of Hoeffding decomposition \cite{Hoe} 
(see \cite{ES} or \cite[Section2]{KR} for the general non-symmetric non-iid case). Given
a integrable random variable $f(X)$, it is the unique decomposition
\begin{eqnarray*}
f(X_1, \ldots, X_n) &=& \bbe f(X) + \sum_{1\le i\le n} h_i(X_i) 
+ \sum_{1\le i<j\le n} h_{ij} (X_i, X_j )+ \ldots\\
&=& f_0 + f_1 + \ldots + f_n
\end{eqnarray*}
such that $\bbe^{(i_s)} h_{i_1,\ldots,i_k}(X_{i_1}, \ldots, X_{i_k})=0$, 
whenever $1 \le i_1 <\ldots< i_k \le n$, $s = 1, \ldots, k$. The term $f_d$ is called the Hoeffding term of degree $d$ and these terms form an orthogonal decomposition (provided, of course, that 
$f(X)$ is square integrable); and so 
$\var f = \sum_{k=1}^n \var\,f_k=\sum_{I\subset \{1,\ldots,n\}} \bbe h_I^2$, where $I\neq \emptyset$.  

Continuing with our 
notation, for any $i=1, \dots, n$, let 
$\bbe_i$ denote the conditional expectation with respect to 
the $\sigma$-field generated  by $X_1, \dots, X_{i}$, i.e., 
$\bbe_i S:= \bbe (S\mid X_1,\dots, X_{i})$, while this time 
$\bbe_0 S = \bbe S$.    

Then, it is easily seen that above,  
$f_0 = \bbe_0 f$, $h_i = \bbe_i f - \bbe_0 f$, $i= 1, \dots, n$, 
$h_{ij} = \bbe_{ij} f - \bbe_{i} f - \bbe_{j} f +  \bbe_0 f$, $1\le i < j \le n$, etc.

The following lemma provides a relationship between the previously 
introduced iterated jackknives and the variance of the Hoeffding terms.
\begin{lem}\label{lej}
For any $k$ such that $1\le k\le n$, 
\[
\frac{1}{k!}\bbe J_k(f)=\sum_{j=k}^n{j\choose k}\var \,f_j
\], 
\[
\frac{1}{k!}\bbe K_k(f)= \var\, f_k\,, 
\]
and so 
\[
\bbe J_k(f)= \sum_{j=k}^n\frac{1}{(j-k)!}\bbe K_j(f).  
\]
\end{lem}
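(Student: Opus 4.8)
The plan is to evaluate each of $\bbe J_k(f)$ and $\bbe K_k(f)$ by expanding the relevant iterated variances over the orthogonal Hoeffding decomposition $f=f_0+f_1+\cdots+f_n$, with $f_j=\sum_{|I|=j}h_I$, and then to obtain the third identity as a purely algebraic consequence of the first two.

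The central step I would isolate is the following claim, proved by induction on $k$: for every square-integrable $g$ with Hoeffding decomposition $g=\sum_I g_I$ (so each $g_I$ depends only on the coordinates in $I$ and $\bbe^{(j)}g_I=0$ for $j\in I$) and every choice of distinct $i_1,\dots,i_k\in\{1,\dots,n\}$,
$$\bbe\,\var^{(i_1,\dots,i_k)}g=\sum_{I\supseteq\{i_1,\dots,i_k\}}\bbe g_I^2 .$$
For $k=1$ this is the identity $\bbe\var^{(i)}g=\bbe g^2-\bbe(\bbe^{(i)}g)^2=\bbe\big(g-\bbe^{(i)}g\big)^2=\bbe\big(\sum_{I\ni i}g_I\big)^2=\sum_{I\ni i}\bbe g_I^2$, the last step using orthogonality of the $g_I$. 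For the inductive step I would start from the defining recursion \eqref{generalit}, take expectations so that the outermost $\bbe^{(i_1)}$ vanishes, and write $\bbe\,\var^{(i_1,\dots,i_k)}g=\bbe\,\var^{(i_2,\dots,i_k)}g-\bbe\,\var^{(i_2,\dots,i_k)}(\bbe^{(i_1)}g)$. The first term is the induction hypothesis applied to $g$. For the second, observe that $\bbe^{(i_1)}g=\sum_{I:\,i_1\notin I}g_I$, which by uniqueness is precisely the Hoeffding decomposition of $\bbe^{(i_1)}g$; the induction hypothesis applied to $\bbe^{(i_1)}g$ then gives $\sum_{I\supseteq\{i_2,\dots,i_k\},\,i_1\notin I}\bbe g_I^2$. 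Subtracting, and using that $i_1$ differs from $i_2,\dots,i_k$, leaves exactly $\sum_{I\supseteq\{i_1,\dots,i_k\}}\bbe g_I^2$.

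Granting the claim, the first identity follows by summing over $i_1<\cdots<i_k$: each set $I$ with $|I|=j\ge k$ has $\binom jk$ subsets of size $k$, so $\tfrac1{k!}\bbe J_k(f)=\sum_{i_1<\cdots<i_k}\bbe\,\var^{(i_1,\dots,i_k)}f=\sum_{j=k}^n\binom jk\var f_j$. For the second identity, note that $\bbe^{\overline{(i_1,\dots,i_k)}}f$ is obtained by integrating out all variables except $X_{i_1},\dots,X_{i_k}$, and since $\bbe^{(j)}h_I=0$ for $j\in I$ this collapses to $\bbe^{\overline{(i_1,\dots,i_k)}}f=\sum_{I\subseteq\{i_1,\dots,i_k\}}h_I$, which is its own Hoeffding decomposition; among subsets of $\{i_1,\dots,i_k\}$ the only one containing $\{i_1,\dots,i_k\}$ is the full set. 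Applying the claim to $g=\bbe^{\overline{(i_1,\dots,i_k)}}f$ therefore gives $\bbe\,\var^{(i_1,\dots,i_k)}\bbe^{\overline{(i_1,\dots,i_k)}}f=\bbe h_{\{i_1,\dots,i_k\}}^2$, and summing over $i_1<\cdots<i_k$ yields $\tfrac1{k!}\bbe K_k(f)=\sum_{|I|=k}\bbe h_I^2=\var f_k$. Finally, combining the two gives $\bbe J_k(f)=k!\sum_{j=k}^n\binom jk\var f_j=\sum_{j=k}^n\frac{j!}{(j-k)!}\var f_j=\sum_{j=k}^n\frac1{(j-k)!}\bbe K_j(f)$.

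The main obstacle is the inductive claim. The subtlety, already flagged in the excerpt, is that $\var^{(i_1,\dots,i_k)}$ is \emph{not} a conditional variance, so one cannot read off its expansion over the Hoeffding basis directly; the recursion \eqref{generalit} is genuinely needed, together with the (elementary but essential) fact that conditioning on the complement of a coordinate block is an orthogonal projection onto a sub-collection of Hoeffding terms. Everything else — the binomial counting and the fact that conditional expectations preserve square-integrability (being $L^2$-contractions) — is routine.
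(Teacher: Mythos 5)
Your proof is correct and takes essentially the same approach as the paper: both establish the key identity $\bbe\,\var^{(i_1,\dots,i_k)}S=\sum_{I\supseteq\{i_1,\dots,i_k\}}\bbe h_I^2$ by induction using the orthogonality and degeneracy of the Hoeffding terms, then conclude with the same binomial count for $\bbe J_k$ and the collapse $\bbe^{\overline{(i_1,\dots,i_k)}}S=\sum_{I\subseteq\{i_1,\dots,i_k\}}h_I$ for $\bbe K_k$. The only cosmetic difference is that you run the induction through the recursion \eqref{generalit} applied to the whole function (invoking uniqueness of the Hoeffding decomposition of $\bbe^{(i_1)}g$), whereas the paper tracks the action of the operators term by term on each $h_I$.
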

\begin{proof}
Rewrite the Hoeffding decomposition of $f$ as $f = \bbe f + \sum_{I\subset \{1,\ldots,n\}} h_I$, 
where again $I\neq \emptyset$.  Then, 
$\bbe^{(i)} h_I=0$, whenever $i\in I$, and $\bbe^{(i)}h_I = h_I$ otherwise.  
Hence, $\var^{(i)} h_I= \bbe^{(i)} h_I^2$, if $i\in I$ and $0$ otherwise.  Therefore, 
$ \bbe \var^{(i)} S = \sum_{i\in I} \bbe h_I^2$.

Continuing with the same reasoning, we see that $\var^{(i)}\bbe^{(j)} h_I=\bbe^{(i)} h_I^2$, 
if $i\in I$ and $j\notin I$ and $0$ otherwise, thus 
$\bbe \var^{(i)} \bbe^{(j)} S = \sum_{i\in I,j\notin I} \bbe h_I^2$, so that 
$ \bbe \var^{(i,j)} S = \sum_{\{i,j\}\subset I} \bbe h_I^2$ and by induction, we get 
that $$ \bbe \var^{(i_1,\ldots,i_k)} S = \sum_{\{i_1,\ldots,i_k\}\subset I} \bbe h_I^2\,.$$ 
If we now sum over the possible sets of indices, since each term $\bbe h_I^2$ appears 
as many times as there are subsets of 
size $k$ of $I$, this implies 
that $\bbe J_k=k!\sum_{|I|=k}^n {|I| \choose k} \bbe h_I^2 
= k!\sum_{j=k}^n{j\choose k}\var\, f_j$,  
proving the first statement.

To prove the second statement of the lemma, observe that 
$\bbe^{\overline{(i_1,\ldots,i_k)}} S = \sum_{I\subset \{i_1,\ldots,i_k\}} h_I$ so that $ \bbe \var^{(i_1,\ldots,i_k)} \bbe^{\overline{(i_1,\ldots,i_k)}} S =  \bbe h_{i_1,\ldots,i_k}^2$,  
and therefore $\bbe K_k =k!  \sum_{|I|= k}\bbe h_I^2=k!\var\, f_k$.

To obtain the third statement, just combined the previous two.  

\end{proof}

It is easily verified that \eqref{varexp} can be recovered 
from Lemma~\ref{lej} and that 
\begin{equation}\label{varexautre}
\var\; S = \bbe J_1 - \sum_{k=2}^n \frac{k-1}{k!}\bbe K_k.  
\end{equation}
Moreover, still from Lemma~\ref{lej}, 
\begin{equation}\label{varexK}
\var\; S =  \sum_{k=1}^n \frac{1}{k!}\bbe K_k.  
\end{equation}

Lemma~\ref{lej} also easily imply the 
following corollary obtained in \cite{BGS} (as part of their Theorem 1.8) which moreover can be complemented with the trivial lower bound 
$\bbe K_d/k! \le \var\, S$.

\begin{cor}\label{jackcor}  
Let $S$ have Hoeffding decomposition 
of type $S=\bbe S + \sum_{k=d}^n S_k$, i.e., such that $f_k=0$, for $1\le k< d$,
then 
\begin{equation}\label{ineq3}
{\rm Var}\; S \le \frac{1}{d!} \bbe J_{d}. 
\end{equation}
\end{cor}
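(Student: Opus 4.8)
The plan is to derive Corollary~\ref{jackcor} directly from Lemma~\ref{lej} together with the orthogonal Hoeffding decomposition. First I would recall that under the hypothesis $f_k = 0$ for all $1 \le k < d$, the variance decomposes as ${\rm Var}\, S = \sum_{j=d}^n \var\, f_j$, since the lower-degree Hoeffding terms vanish and the remaining ones are orthogonal. Then I would invoke the first identity of Lemma~\ref{lej} with $k = d$, namely
\[
\frac{1}{d!}\bbe J_d = \sum_{j=d}^n \binom{j}{d} \var\, f_j\,.
\]
Since $d \le j \le n$ implies $\binom{j}{d} \ge \binom{d}{d} = 1$, every coefficient on the right-hand side is at least $1$, and all the terms $\var\, f_j$ are non-negative. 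Hence
\[
\frac{1}{d!}\bbe J_d = \sum_{j=d}^n \binom{j}{d}\var\, f_j \ge \sum_{j=d}^n \var\, f_j = {\rm Var}\, S\,,
\]
which is exactly \eqref{ineq3}.

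For the complementary trivial lower bound $\bbe K_d / d! \le {\rm Var}\, S$, I would use the second identity of Lemma~\ref{lej}, $\bbe K_d / d! = \var\, f_d$, and then note that $\var\, f_d$ is a single term in the orthogonal sum ${\rm Var}\, S = \sum_{j=d}^n \var\, f_j$ of non-negative quantities, so it cannot exceed the total. Combining, one gets $\bbe K_d/d! \le {\rm Var}\, S \le \bbe J_d / d!$, which also recovers the inequality $\bbe K_d \le \bbe J_d$ already noted before Theorem~\ref{jackit2}.

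I do not anticipate a serious obstacle here: the entire argument is a one-line consequence of Lemma~\ref{lej} once one observes the monotonicity of the binomial coefficients $\binom{j}{d}$ in $j$ for $j \ge d$. The only point requiring a modicum of care is making sure the hypothesis is correctly translated — that $f_k = 0$ for $1 \le k < d$ genuinely removes those summands from both the Hoeffding expansion of the variance and (vacuously) from the sum in Lemma~\ref{lej}, which starts at $j = k = d$ anyway, so no cancellation or sign issue arises. This is in contrast to the proofs of Theorems~\ref{jackit} and \ref{jackit2}, where the alternating signs forced a more delicate truncation argument; here the sparsity assumption makes everything collapse to a direct comparison of non-negative series term by term.
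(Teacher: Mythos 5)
Your argument is correct and is essentially identical to the paper's proof: both use the vanishing of the low-degree Hoeffding terms to write ${\rm Var}\, S = \sum_{j=d}^n \var f_j$ and then compare term by term with $\frac{1}{d!}\bbe J_d = \sum_{j=d}^n \binom{j}{d}\var f_j$ from Lemma~\ref{lej}, using $\binom{j}{d}\ge 1$. Your added observation about the lower bound $\bbe K_d/d! \le {\rm Var}\, S$ matches the paper's remark preceding the corollary, so nothing is missing.
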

\begin{proof}
 Using the fact that $f_k=0$, for $1\le k< d$, we have
 \[
 \var\,S = \sum_{j=d}^n \var\,f_j \le \sum_{j=d}^n {j\choose d} \var\,f_j  = \frac{1}{d!}\bbe J_d, 
 \]
 where the last equality follows from Lemma~\ref{lej}.
 \end{proof}

\end{document}